\documentclass{amsart}
\usepackage{amsmath,amsthm}
\usepackage{amsfonts,amssymb}
\usepackage{mathrsfs}
\usepackage{enumerate}
\usepackage{verbatim}

\usepackage{multirow}
\usepackage{booktabs}

\usepackage[numbers,sort,compress]{natbib}

\newtheorem{thm}{Theorem}[section]

\newtheorem{lem}[thm]{Lemma}

\newtheorem{rem}{Remark}

\numberwithin{equation}{section}

\newcommand{\al}{\alpha}

\def\vz{\varepsilon}

\def\lz{\lambda}

\def\({\Bigl(}
\def \){ \Bigr)}

\newcommand{\NN}{\mathbb{N}}

\newcommand{\bsj}{\oldsymbol{j}}

 \def\NN{{\mathbb N}}

 \def\RR{{\mathbb R}}

\def\bsj{{\bf j}}

\makeatletter
\@namedef{subjclassname@2020}{\textup{2020} Mathematics Subject Classification}
\makeatother

\begin{document}

\def\RR{\mathbb{R}}
\def\Exp{\text{Exp}}
\def\FF{\mathcal{F}_\al}

\title[] {A note about algebraic $(s,t)$-weak tractability of linear
tensor product problems in the worst-case setting}

\author{Zirong Liu} \address{ School of Mathematical Sciences, Capital Normal
University, Beijing 100048, China.} \email{2240501011@cnu.edu.cn}

\author{Heping Wang} \address{ School of Mathematical Sciences, Capital Normal
University, Beijing 100048, China.}\email{wanghp@cnu.edu.cn}

\keywords{Linear tensor product problem; ALG-$(s,t)$-weak
tractability; absolute error criterion; worst-case setting}
\subjclass[2020]{65Y20, 65D15}

\begin{abstract}

This paper is devoted to discussing  the  linear tensor product
problems in the worst  case setting. We consider algorithms that
use finitely many evaluations of arbitrary continuous linear
functionals. We investigate  algebraic $(s,t)$-weak tractability
(ALG-$(s,t)$-WT) under the absolute error criterion  in the case
$\lambda_1>1$, where $\lambda_1$ is the square of the univariate
maximal singular value. We solve the problem by
 giving the necessary and sufficient conditions
for ALG-$(s,t)$-WT on univariate  singular values   and fill the
gap left open.
\end{abstract}

\maketitle
\input amssym.def

\section{Introduction and main results}

In the field of information-based complexity (IBC), the
computational hardness of solving a multivariate  problem $S =
\{S_d\}_{d \in \mathbb{N}}$ is measured by its information
complexity $n(\varepsilon, S_d)$, which is defined as the minimal
number of information evaluations required to approximate the
solution to within an accuracy $\varepsilon > 0$. Tractability of
multivariate problems studies how the information complexity
$n(\varepsilon, S_d)$  depends on  the dimension  $d$ and the
reciprocal of the accuracy $\vz$. There are two kinds of
tractability based on polynomial convergence and exponential
convergence. The algebraic tractability (ALG tractability)
describes how  $n(\vz, d)$ behaves as a function of $d$ and
$\vz^{-1}$, while the exponential tractability (EXP tractability)
does as a function of $d$ and $1+\ln\vz^{-1}$. Recently the  study
of ALG and EXP tractabilities has attracted much interest, and a
great number of interesting results
 have been obtained (see
\cite{ CW1, DKPW14, DLPW11, GW11, IKPW16a, KW19, KPW, NW08, NW10, NW12,
PP14, PPW17, S13, SW15, W19, X15} and the references therein).

This note is devoted to discussing ALG tractability of linear
tensor product problems in the worst case setting. For such
problems with respect to the absolute and normalized error
criteria, necessary and sufficient conditions for various notions
of ALG tractability on univariate singular values were obtained in
\cite{NW08,  S13, GW11, SW15}, and  those for  EXP tractability
were obtained in \cite{HKW19, PP14, PPW17}. However, we notice
that there is only one case left open. More precisely, for the
absolute error criterion and in the case  $\lz_1>1$,  necessary
and sufficient condition for
 ALG-$(s,t)$-WT  were not obtained, where $\lz_1$ is  the square of the
 univariate maximal
singular value.

  In this note we investigate  ALG-$(s, t)$-WT   for the absolute error criterion in the case  $\lz_1>1$. We  solve the
problem by giving a necessary and sufficient condition for
 ALG-$(s,t)$-WT and fill  the  gap left open.

\subsection{Linear tensor product problems.}

\

Linear tensor product problems are
important special cases of linear multivariate problems over
Hilbert spaces. A linear tensor product problem is defined as the $d$-folded
tensor product of a single univariate linear problem, see
\cite[Chapter 5.2]{NW08}.

We consider a compact linear operator $S_1 : H_1\to G_1$, where
$H_1$ and $G_1$ are two Hilbert spaces, and without loss of
generality we assume $\dim H_1=\infty$. Then $W_1 := S_1^* S_1:
H_1 \to H_1$ is a compact self-adjoint non-negative operator,
where $S^*$ is the adjoint operator of the linear operator $S$.
The eigenvalues of $W_1$ are $\lz_j,\ j\in\NN$, which satisfy
$$\lambda_1=\lambda_2=\cdots=\lambda_m>\lambda_{m+1}\ge\lambda_{m+2}\ge\dots\ge0,\, m\in \NN .$$
Since $S_1$ is compact, it follows that
$\lim\limits_{j\to\infty}\lambda_j=0$. Without loss of generality,
we assume $\lz_2>0$.

 For $d\in \Bbb N$, the tensor product operator $S_{d}$ is defined
 to be the $d$-fold tensor product of the
compact linear operator $S_1$, i.e.,
$$S_{d}=\bigotimes_{i=1}^d S_{1}: H_{d}\to G_d,$$
where $H_{d}$ and $G_d$ are the Hilbert spaces given by
$$H_{d}=\bigotimes_{i=1}^dH_{1},\ \
G_d=\bigotimes_{i=1}^d G_1.$$
 We say that $S = \{S_d\}_{d\in\NN}$ is a linear tensor
product problem. Clearly, $W_{d}:=S_{d}^*S_{d}: H_{d}\to H_{d} $
is a compact linear operator and the eigenvalues of $W_{d}$ are
$\lz_{d,\bsj},\ {\bsj\in\NN^d}$, satisfying
$$\lz_{d,\bsj}=\prod_{k=1}^d \lz_{j_k}.$$
Assume that $\{\lambda_{d,j}\}_{j\in\Bbb N}$ is the non-increasing
rearrangement of $\{\lz_{d,\bsj}\}_{\bsj\in \Bbb N^d}$. Then
$$\lz_{d,1}\ge \lz_{d,2}\ge \dots \ge 0, \ \ \
\lz_{d,1}=\lz_1^d,$$and $\{\lz_{d,j}\}_{j\in\Bbb N}$ is the
squares of the ordered singular values of $S_d$.

Consider the problem of approximating $S_d$ by algorithms
$A_{n,d}$ using at most $n$ information evaluations from the class
$\Lambda^{\rm all}$ consisting of all continue functionals on
$H_d$.
 The form of $A_{d,n}$ is
$$
A_{n,d}(f)=\phi_{n,d}(L_1(f),L_2(f),\dots,L_d(f)),
$$
where $L_j\in \Lambda^{\rm all }=H_d^*$ and $\phi_{n,d}:\RR^n \to
G_d$ is any mapping. The $n$-th minimal worst case error of $S_d$
is
$$e(n,d)=\inf_{A_{n,d}}\sup_{\substack{f \in H_{d}\\ \|f\|_{H_{d} } \le
1}}\|S_{d}(f)-A_{n,d}(f)\|_{G_d}.$$ For $n=0$, the initial error
$e(0,d)$ is denoted by
$$ e(0,d)=\sup_{\substack{  f \in H_{d} \\ \|f\|_{H_{d}} \le 1}}\|S_{d}(f)\|_{G_d}.$$
It is well known that for the linear tensor product problem
$S=\{S_d\}$ in the worst case setting, the $n$-th minimal worst
case error is related to the $(n+1)$-th  singular value of the
operator $S_d$, i.e.,
$$e(n,d)=\sqrt{\lambda_{d,n+1}} \ \ {\rm and }\ \  e(0,d)=\sqrt{\lz_{d,1}}=\lz_1^{d/2}.$$

 The information complexity for the normalized error criterion  (NOR)
or the absolute error criterion (ABS) is defined by
\begin{equation*}
n^X(\vz,S_{d})=
\min\{\,n\in \NN : e(n,d) \le  \varepsilon {\rm CRI}_d\,\},
\end{equation*}
where $X\in\{{\rm ABS,\ NOR}\}$, and
$${\rm CRI}_d=\left\{
\begin{array}{ll}
1 & \mbox{ if } X={\rm ABS},\\
e(0,d) & \mbox{ if } X={\rm NOR}.
\end{array}\right.$$
 It follows
$$n^{\rm ABS}(\vz,S_d)=n^{\rm NOR}({\vz}/{\lz_1^{d/2}},S_d),\ \ \ n^{\rm ABS}(\lz_1^{1/2}\vz,S_1)=n^{\rm NOR}(\vz,S_1).$$
 The information complexity can be rewritten
as
\begin{align}\label{1.1}
n^X(\vz,S_{d})&=\min\{\,n\in \NN : \lambda_{d,n+1}\le  \varepsilon^2 {\rm CRI}^2_d\,\}\notag\\
&= \big|\big\{(j_1,\dots,j_d) \in \NN^d : \prod_{k=1}^d\lz_{j_k}>
\varepsilon^2 {\rm CRI}^2_d \big\}\big|,
\end{align}where $| A |$
represents the cardinality of the set $A$.

Various notions of ALG or EXP tractability  are defined in terms
of how the information complexity $n^X(\vz,S_{d})$ depends on $d$
and $\vz^{-1}$, or alternatively on $d$ and $1+\ln \vz^{-1}$.  In
this note, we consider ALG-$(s,t)$-weak tractability
(ALG-$(s,t)$-WT). For fixed $s, t> 0$, the problem $S$ is said to
be ALG-$(s,t)$-weakly tractable
 if
\begin{equation*}
\lim\limits_{\varepsilon^{-1} + d \to \infty} \frac{\ln
n^{X}(\varepsilon, S_d)}{\varepsilon^{-s}+d^t} = 0.
\end{equation*}
Roughly speaking,  ALG-$(s, t)$-WT means that  the information
complexity is neither exponential in $d^t$, nor in $\vz^{-s}$.

Similarly, for fixed  $s, t> 0$, the problem $S$ is said to be
EXP-$(s,t)$-weakly tractable (EXP-$(s,t)$-WT) if
\begin{equation*}
\lim\limits_{\varepsilon^{-1} + d \to \infty} \frac{\ln
n^{X}(\varepsilon, S_d)}{(1+\ln\varepsilon^{-1})^s +d^t} = 0.
\end{equation*}

\subsection{Main results}\label{Main Results}

\

For the linear tensor product problem $S$ in the worst case
setting, the information complexity $n^{X}(\vz,S_d )$ is fully
determined by the univariate singular values  of $S_1$. For NOR or
ABS, Siedlecki and Weimar in \cite[Theorems 3.9 and 3.10]{SW15}
gave the necessary and sufficient conditions for ALG-$(s,t)$-WT.
However, for ABS and in the case $\lambda_1>1$, they provided
separate necessary and sufficient conditions for ALG-$(s,t)$-WT.
See the following ALG-$(s,t)$-WT results of the linear tensor
product problem $S$ with $\lz_2>0$ for ABS and for the class
$\Lambda^{\rm all}$.

    \begin{itemize}
        \item Let $\lambda_1<1$. Then $S$ is ALG-$(s,t)$-WT if and only if
            \begin{equation}\label{1.2}
                \lim_{n\to\infty}\frac{\lambda_n}{\ln^{-2/s}n}=0.
            \end{equation}
        \item Let $\lambda_1=1$ and
            \begin{itemize}
                \item[(1)] assume that $m=1$. Then $S$ is ALG-$(s,t)$-WT if and only if
                    \begin{equation*}
                        \lim_{n\to\infty}\frac{\lambda_n}{\ln^{-2/s}n}=0.
                    \end{equation*}
                \item[(2)] assume that $m>1$. Then $S$ is ALG-$(s,t)$-WT if and only if
                    \begin{equation*}
                        t>1
                        \quad \text{and} \quad
                        \lim_{n\to\infty}\frac{\lambda_n}{\ln^{-2/s}n}=0.
                    \end{equation*}
            \end{itemize}
        \item Let $\lambda_1>1$ and define $S_1':=\frac{1}{\sqrt{\lambda_1}} S_1$. Then ALG-$(s,t)$-WT of $S$ implies
    \begin{equation*}
        t>1
        \quad \text{and} \quad
        \lim_{\vz^{-1}+d\to\infty} \frac{\max_{\ell=1,\ldots,d}\limits \left[ \ell \cdot \ln n^{\mathrm{abs}}\!\left( ( \vz / \lambda_1^{d/2} )^{1/\ell},\, S'_1 \right) \right]}{\vz^{-s}+d^{t}}
        = 0.
    \end{equation*}
        Moreover, the conditions
    \begin{equation*}
        \quad t>1
        \quad \text{and} \quad
        \lim_{\vz^{-1}+d\to\infty} \frac{\ln d \cdot \max_{\ell=1,\dots,d}\limits \left[ \ell \cdot \ln n^{\mathrm{abs}}\!\left( ( \vz /\lambda_1^{d/2} )^{1/\ell},\, S'_1 \right) \right]}{\vz^{-s}+d^{t}}
        = 0
    \end{equation*}
    are sufficient for $S$ to be ALG-$(s,t)$-WT.
    \end{itemize}

Although Siedlecki and
Weimar did not establish necessary and sufficient conditions on
 $\{\lz_j\}$ for ALG-$(s,t)$-WT under the ABS setting with $\lz_1>1$, they
 gave in \cite[Lemma 3.12]{SW15} that  a necessary condition for ALG-$(s,t)$-WT is
\begin{equation*}
\lim_{j\rightarrow\infty}\frac{(\ln\frac{1}{\lambda_j})^t}{\ln
j}=\infty.
\end{equation*}
Clearly, this necessary condition is much stronger than the decay
condition \eqref{1.2} which characterizes ALG-$(s,t)$-WT in the
case $\lz_1\le1$ or for NOR.

In  this note, we investigate ALG-$(s,t)$-WT for ABS in the case
$\lz_1>1$ and obtain a necessary and sufficient condition for
 ALG-$(s,t)$-WT. We show that  the above necessary condition
 together with
 the condition $t>1$ are also sufficient for ALG-$(s,t)$-WT. Our
 main result can be formulated as follows.

\begin{thm}\label{thm1} Consider a linear tensor product problem $S=(S_d)_{d\in\NN}$ with $\lambda_2>0$ in the
worst case setting for the absolute error criterion and for the
class $\Lambda^{\rm all}$.
 Assume that $\lambda_1>1$. Then $S$ is ALG-$(s,t)$-WT if and only if
 $t>1$ and
\begin{equation}\label{1.3}
\lim_{j\rightarrow\infty}\frac{(\ln\frac{1}{\lambda_j})^t}{\ln
j}=\infty.
\end{equation}
\end{thm}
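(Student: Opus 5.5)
The plan is to get necessity from the results quoted above and to prove sufficiency by a direct counting argument based on \eqref{1.1}.

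\emph{Necessity.} This is already available. If $S$ is ALG-$(s,t)$-WT with $\lambda_1>1$, then the third item of the list quoted from \cite[Theorems 3.9 and 3.10]{SW15} gives $t>1$, and \cite[Lemma 3.12]{SW15} gives \eqref{1.3}. So it remains to show that $t>1$ together with \eqref{1.3} implies
\[
\ln n^{\rm ABS}(\varepsilon,S_d)=o(\varepsilon^{-s}+d^t).
\]

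\emph{Sufficiency: setup.} Fix $M>0$. Condition \eqref{1.3} gives an index $J=J(M)$ such that
\[
\lambda_j\le \exp\bigl(-(M\ln j)^{1/t}\bigr)\quad\text{for } j>J .
\]
Enlarging $J$ if necessary, we may also assume $\lambda_j\le\delta<1$ for $j>J$, with $\delta$ fixed.

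Split each $\bsj\in\NN^d$ counted in \eqref{1.1} into its "small" coordinates ($j_k\le J$) and its "large" coordinates ($j_k>J$), and let $r$ be the number of large ones. The small coordinates contribute at most $\lambda_1^{d-r}\le\lambda_1^d$ to the product. Hence the large coordinates $j_{i_1},\dots,j_{i_r}$ must satisfy
\[
\prod_{l=1}^r\lambda_{j_{i_l}}>\varepsilon^2\lambda_1^{-d}.
\]
Set $L:=2\ln\varepsilon^{-1}+d\ln\lambda_1$, assuming $L>0$ since otherwise $n=0$. The inequality above then yields two facts:
\begin{itemize}
\item[(a)] Since each large factor is at most $\delta$, we get $r\le L/\ln(1/\delta)$.
\item[(b)] Writing $x_l=\ln j_{i_l}$, we get $\sum_l (Mx_l)^{1/t}<L$. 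Because $1/t<1$, the map $x\mapsto x^{1/t}$ is subadditive, so $\sum_l x_l\le (\sum_l x_l^{1/t})^t$. This gives $\prod_l j_{i_l}\le \exp(L^t/M)=:N$.
\end{itemize}

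\emph{Sufficiency: the count.} There are at most $2^d$ choices of the position set of the large coordinates and at most $J^d$ choices of the small coordinates. The number of $r$-tuples of positive integers with product at most $N$ is at most $N(1+\ln N)^{r}$. Summing over $r\le L/\ln(1/\delta)$ therefore gives
\[
\ln n^{\rm ABS}(\varepsilon,S_d)\le d\ln(2J)+\ln(1+L)+\frac{L^t}{M}+\frac{L}{\ln(1/\delta)}\ln\Bigl(1+\frac{L^t}{M}\Bigr).
\]

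\emph{Asymptotics.} We have $L^t\le C_t\bigl((\ln\varepsilon^{-1})^t+d^t\bigr)$, and $(\ln\varepsilon^{-1})^t=o(\varepsilon^{-s})$. Hence $L^t/M\le (C/M)(\varepsilon^{-s}+d^t)$ for $\varepsilon^{-1}+d$ large. Since $t>1$, both $d\ln(2J)$ and $L\ln(1+L^t)\lesssim (\ln\varepsilon^{-1}+d)\ln(\ln\varepsilon^{-1}+d)$ are $o(\varepsilon^{-s}+d^t)$. It follows that
\[
\limsup_{\varepsilon^{-1}+d\to\infty}\frac{\ln n^{\rm ABS}(\varepsilon,S_d)}{\varepsilon^{-s}+d^t}\le \frac{C}{M},
\]
and letting $M\to\infty$ gives ALG-$(s,t)$-WT.

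The main obstacle is step (b): turning the per-coordinate decay into a bound on the product of the large indices without losing a factor of $r$. The subadditivity of $x^{1/t}$ is what achieves this, and it is the only place where the exponent structure of \eqref{1.3} is used. The other delicate point is the $L\ln L$ term, which needs $t>1$ so that $d\ln d=o(d^t)$. The case where $\varepsilon^{-1}$ stays bounded while $d\to\infty$, and vice versa, are both covered uniformly by the displayed estimate.
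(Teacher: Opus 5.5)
Your proposal is correct, and for sufficiency it takes a genuinely different route from the paper.

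The paper works through the univariate complexity $j(\varepsilon)$. Lemma~\ref{lem2} sorts the indices of a counted $\bsj$ and uses that the $k$-th largest one is at most $j(\varepsilon^{1/k})$. This gives $n^{\rm NOR}(\varepsilon,S'_d)\le d!\,m^d\prod_{k=1}^d j(\varepsilon^{1/k})$. Lemma~\ref{lem1} then turns \eqref{1.3} into $\ln j(x)=b(x)(\ln x^{-1})^t$ with $b(x)\to0$. The resulting sum $\sum_{k\le d}b(\cdot)\,k^{-t}L^t$ is handled by a head/tail split based on $\zeta(t)<\infty$.

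You instead apply \eqref{1.3} directly with a fixed threshold $J(M)$. A single concavity inequality then bounds the \emph{product} of all large indices by $e^{L^t/M}$, replacing the paper's scale-by-scale estimate. You finish the count by bounding the number $r\lesssim L$ of large coordinates and the number of $r$-tuples with bounded product.

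What your route buys: it dispenses with the normalization, with both auxiliary lemmas, and with the $\zeta(t)$ bookkeeping. The price is the extra term $L\ln(1+L^t/M)$ and the lattice-point estimate $\#\{(a_1,\dots,a_r)\in\NN^r: a_1\cdots a_r\le N\}\le N(1+\ln N)^{r}$. You should justify that estimate; induction on $r$ using $\sum_{a\le N}1/a\le 1+\ln N$ does it.

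What the paper's route buys: it stays in the $j(\varepsilon)$-language of \cite{SW15}. It makes explicit that everything reduces to the one-dimensional condition $\ln j(\varepsilon)=o((\ln\varepsilon^{-1})^t)$.

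Two minor points. First, the number of admissible $r$ contributes $\ln(1+L/\ln(1/\delta))$ rather than $\ln(1+L)$; you can avoid the term altogether via $\sum_r\binom dr J^{d-r}\le(1+J)^d$. Second, citing \cite{SW15} for necessity is legitimate. The paper, however, re-derives it in a few lines from the curse of dimensionality and the bound $n^{\rm ABS}(1/2,S_d)\ge j\bigl(1/(2\lambda_1^{d/2})\bigr)$, combined with Lemma~\ref{lem1}.
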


\begin{rem}It is interesting
to see that ALG-$(s,t)$-WT only involves the parameter $t$ in the
case $\lz_1>1$ and for ABS. The authors in \cite{PPW17}
investigated EXP-tractability of the linear tensor product problem
 $S=\{S_d\}_{d\in\NN}$ with $\lambda_2>0$ in the
worst case setting  for the class $\Lambda^{\rm all}$. They
obtained that in the case $\lz_1> 1$ and for ABS, $S$ is
EXP-$(s,t)$-WT if and only if $t>1$ and
\begin{equation*}
\lim_{j\rightarrow\infty}\frac{(\ln\frac{1}{\lambda_j})^{\min(s,t)}}{\ln
j}=\infty.
\end{equation*}By Theorem \ref{thm1}, we obtain that  for ABS and in the case $\lz_1> 1$, $S$ is
ALG-$(s,t)$-WT if and only if  $S$ is ALG-$(t,t)$-WT, and  if and
only if  $S$ is EXP-$(s,t)$-WT with $s>t$.
\end{rem}

The paper is organized as follows. In Section 2 we give some
preliminaries in the worst case setting. In Section 3, we give the
proof of Theorem \ref{thm1}.

\section{Premilinaries}\

For $\varepsilon \in (0,1)$ and
$\lim\limits_{j\to\infty}\lambda_j=0$, define
\begin{equation}
 j(\varepsilon) :=n^{\rm NOR}(\vz,S_1)=  \max\{j \in \NN : \lambda_j > \varepsilon^2\lambda_1\}.
\end{equation}
We put $j(\vz)=0$
for $\vz\ge1$. Then $j(\vz)$ is well defined and always finite.   Furthermore, $j(\vz)$ goes
to infinity if and only if all $\lambda_j$'s are positive.

Consider the normalized problem $S'=\{S'_d\}$. Let
$S'_1=\frac{1}{\sqrt{\lambda_1}}S_1$. The ordered eigenvalues
$\lz'_j,\ {j\in \NN}$ of $W'_1:= (S'_1)^ * \cdot S'_1$ satisfy
$\lz'_j=\frac{\lz_j}{\lz_1}$ and
$$1=\lz'_1=\cdots=\lz'_m>\lz'_{m+1}\ge\lz'_{m+2}\ge\dots\ge0.$$
Let $S'_d=\bigotimes\limits_{i=1}^{d}S'_1$ be the $d$-fold tensor
product operator of $S_1'$. Then the eigenvalues of
$W'_{d}:=(S'_{d})^*\cdot S'_{d}$ are
$$\lz'_{d,\bsj}=\prod_{k=1}^d \lz'_{j_k},\ \bsj\in \Bbb N^d.$$
Assume that $\{\lambda'_{d,j}\}_{j\in\Bbb N}$ is the
non-increasing rearrangement of $\{\lz'_{d,\bsj}\}_{\bsj\in\Bbb
N^d}$. That is
$$1=\lz'_{d,1}\ge \lz'_{d,2}\ge \dots \ge 0.$$
The information complexity of $S'$ is given by
 $$n^{X}(\vz,S'_d):= \Big|\big\{\bsj \in \NN^d : \lambda'_{j_1} \lambda'_{j_2}\cdots
  \lambda'_{j_d}> \varepsilon^2 {\rm CRI}^2_d  \big\}\Big|.$$
It is obvious that
$$n^{\rm ABS}(\vz,S'_d)=n^{\rm NOR}(\vz,S'_d)=n^{\rm NOR}(\vz,S_d), \ n^{\rm ABS}(\vz,S'_1)=n^{\rm NOR}(\vz,S_1)=j(\vz).$$

\begin{lem}\label{lem1}
 Let $\{\lambda_j\}_{j\in\NN}$ be a non-increasing sequence and $\lim\limits_{j\rightarrow\infty}\lambda_j=0$. Then \eqref{1.3}   holds if and only if
 \begin{equation}\label{2.2}
 \lim_{\vz\to 0}\frac{\ln j(\vz)}{(\ln\vz^{-1})^t}=0.
\end{equation}
\end{lem}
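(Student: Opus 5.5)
We prove the two implications directly from the definition $j(\vz)=\max\{j\in\NN:\lz_j>\vz^2\lz_1\}$. The link between the two conditions is simple. By definition $\lz_{j(\vz)}>\vz^2\lz_1$, which gives an upper bound on $\ln\frac1{\lz_{j(\vz)}}$ in terms of $\ln\vz^{-1}$. Conversely, any $j$ with $\lz_j>\vz^2\lz_1$ satisfies $j\le j(\vz)$.

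First we dispose of a degenerate case. Suppose $\lz_j=0$ for some $j$. Then for all large $j$ we have $\ln\frac1{\lz_j}=\infty$, so \eqref{1.3} holds trivially. Also $j(\vz)$ stays bounded as $\vz\to0$, so \eqref{2.2} holds trivially. Hence we may assume all $\lz_j>0$, so that $j(\vz)\to\infty$ as $\vz\to0$.

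\emph{\eqref{1.3} $\Rightarrow$ \eqref{2.2}.} Fix $M>0$ and choose $J$ such that $(\ln\frac1{\lz_j})^t\ge M\ln j$ for all $j\ge J$. For $\vz$ small enough we have $j:=j(\vz)\ge J$. Since $\lz_j>\vz^2\lz_1$, we get
\[
\ln\tfrac1{\lz_j}<2\ln\vz^{-1}+|\ln\lz_1| .
\]
Therefore
\[
\ln j(\vz)\le \tfrac1M\bigl(2\ln\vz^{-1}+|\ln\lz_1|\bigr)^t .
\]
Dividing by $(\ln\vz^{-1})^t$ and letting $\vz\to0$ gives $\limsup_{\vz\to0}\ln j(\vz)/(\ln\vz^{-1})^t\le 2^t/M$. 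Since $M$ is arbitrary, \eqref{2.2} follows.

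\emph{\eqref{2.2} $\Rightarrow$ \eqref{1.3}.} For each $j$ define
\[
\vz_j:=\Bigl(\frac{\lz_j}{2\lz_1}\Bigr)^{1/2}.
\]
Then $\lz_j>\vz_j^2\lz_1$, so $j\le j(\vz_j)$. Moreover $\vz_j\to0$ as $j\to\infty$, because $\lz_j\to0$. We also have
\[
\ln\vz_j^{-1}=\tfrac12\bigl(\ln\tfrac1{\lz_j}+\ln(2\lz_1)\bigr)\le\ln\tfrac1{\lz_j}
\]
for $j$ large, since $\ln\frac1{\lz_j}\to\infty$ eventually dominates the constant $\ln(2\lz_1)$. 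Now fix $\eta>0$. By \eqref{2.2}, for large $j$,
\[
\ln j\le\ln j(\vz_j)\le\eta(\ln\vz_j^{-1})^t\le\eta\bigl(\ln\tfrac1{\lz_j}\bigr)^t .
\]
Hence $(\ln\frac1{\lz_j})^t/\ln j\ge1/\eta$ for all large $j$, which is \eqref{1.3}.

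Neither direction has a real obstacle. The only point needing care is the choice of $\vz_j$ in the second direction. The strict inequality in the definition of $j(\vz)$ is why we use the factor $2$ in $\vz_j$: it guarantees $j\le j(\vz_j)$. We must also check that $\vz_j\to0$, so that \eqref{2.2} may be applied along this sequence.
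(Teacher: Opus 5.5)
Your proof is correct. The direction \eqref{1.3}$\Rightarrow$\eqref{2.2} is essentially the paper's. The paper solves $\vz^2\lz_1\ge e^{-(M\ln j)^{1/t}}$ for $j$ to get $j(\vz)\le\max\{J,e^{(2\ln\vz^{-1}-\ln\lz_1)^t/M}\}$. You apply the decay hypothesis directly at the index $j=j(\vz)$. The bound and the limit $2^t/M$ are the same.

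The direction \eqref{2.2}$\Rightarrow$\eqref{1.3} is where you genuinely differ. The paper turns the bound on $j(\vz)$ into a bound on $\lz_k$ as follows:
\begin{enumerate}
\item it sets $v(\vz)=\lceil e^{\eta(\ln\vz^{-1})^t}\rceil$ and notes $\lz_{v(\vz)}\le\lz_{j(\vz)+1}\le\vz^2\lz_1$;
\item it argues that every large integer $k$ is a value of $v$, and inverts to get $\lz_k\le\lz_1e^{-2(\ln(k-1)/\eta)^{1/t}}$;
\item it uses the Bernoulli-type estimate $(a-b)^t\ge a^t(1-bt/a)$ to show the ratio tends to $2/\eta$.
\end{enumerate}
You go the other way, from $\lz_j$ to a tailored accuracy $\vz_j=(\lz_j/(2\lz_1))^{1/2}$. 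This gives $j\le j(\vz_j)$ and $\ln\vz_j^{-1}\le\ln\frac1{\lz_j}$ at once, and you then apply \eqref{2.2} along $\vz_j\to0$.

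Your route avoids the surjectivity-of-the-ceiling bookkeeping and the Bernoulli step, so it works verbatim for every $t>0$ and every value of $\lz_1$. The paper's Bernoulli step is stated only for $t>1$ and $\ln\lz_1>0$; that suffices for its use in Theorem \ref{thm1}, but the lemma itself is stated without that restriction. In exchange, the paper's route gives an explicit pointwise decay bound on $\lz_k$ and the constant $2/\eta$ instead of your $1/\eta$. Neither matters for the lemma.
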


\begin{proof}If $\lz_{j}=0$ for
some $j_0\in\NN$, then $\lambda_j=0$ for $j\ge j_0$ and hence
$j(\vz)\le j_0$. It follows that \eqref{1.3} and \eqref{2.2} hold in this case. Without loss of generality, we
assume that $\{\lz_j\}$ is a positive sequence.

We first show that \eqref{2.2} implies \eqref{1.3}. Assume that
\eqref{2.2} holds. For every $\eta\in(0,1)$, there exists an
$\vz_0\in(0,1)$ such that
$$\frac{\ln j(\vz)}{(\ln \vz^{-1})^t}<\eta  \ \ \ {\rm for}\  \vz\in(0,\vz_0).$$
This implies
$$j(\vz)<e^{\eta (\ln\vz^{-1})^t} \le \lceil e^{\eta (\ln\vz^{-1})^t}\rceil :=v(\vz),$$
where $\lceil x\rceil$  is the ceiling of a real number $x$.  From the definition of $j(\vz)$, we
have $j(\vz)+1\le v(\vz)$ and
$$\lambda_{v(\vz)}\le \lambda_{j(\vz)+1}\le \vz^2\lz_1.$$
Let $v(\vz)=k$. If we vary $\vz\in(0,\vz_0)$, then $k$ can take the
values
$$k= \lceil e^{\eta (\ln\vz_0^{-1})^t} \rceil,\lceil e^{\eta (\ln\vz_0^{-1})^t}\rceil+1,\cdots.$$
Since $k\le e^{\eta (\ln\vz^{-1})^t}+1$, we get
$$\vz^{2}\le e^{-2\big(\frac{\ln (k-1)}{\eta}\big)^{1/t}}.$$
Hence, $$\lambda_{k}\le \vz^2\lz_1\le  e^{-2\big(\frac{\ln
(k-1)}{\eta}\big)^{1/t}}\cdot\lz_1\ \  \ {\rm for}\  k\ge \lceil
e^{\eta (\ln\vz_0^{-1})^t}\rceil. $$  This  yields
  \begin{align*}\frac{(\ln\frac{1}{\lambda_{k}})^t}{\ln k}\
  &\ge\frac{\big((\frac{ 2\ln(k-1)}{\eta})^{1/t}-\ln\lz_1\big)^t}{\ln k}\\&\ge
  \frac{\frac{2\ln(k-1)}{\eta}\big(1-\frac{\ln \lz_1\cdot t}{(2 \ln(k-1)/\eta)^{1/t}}\big)}{\ln k}\to \frac{2}{\eta}
  \ \ \ {\rm as \ }k\to\infty,\end{align*}where in
  the last inequality we used the fact that
$(a-b)^t\ge a^t(1-\frac{b}{a}t)$ for $a\ge b> 0$ and $t>1$. Since $\eta$ can be arbitrary small, we obtain \eqref{1.3}.

Next, if $\eqref{1.3}$ holds, then for every $M>0$ there exists a
$J>0$ such that
$$\big(\ln\frac{1}{\lambda_j}\big)^t\geq M\ln j \ \ \ {\rm for}\ j\geq J.$$
This implies $\lambda_j\le e^{-(M\ln j)^{\frac{1}{t}}}$. We solve
the inequality $\vz^2\lz_1\ge  e^{-(M\ln j)^{\frac{1}{t}}},$ and obtain the solution
$$j\ge e^{\frac{(2\ln\vz^{-1}-\ln\lz_1)^t}{M}}.$$
This concludes that if $$j\ge \max\{J,\
e^{\frac{(2\ln\vz^{-1}-\ln\lz_1)^t}{M}}\},$$ then $$\lz_j\le
e^{-(M\ln j)^{\frac{1}{t}}}\le \vz^2\lz_1.$$
 From the definition of $j(\vz)$, we obtain
$$j(\vz)\le \max\{J,\  e^{\frac{(2\ln\vz^{-1}-\ln\lz_1)^t}{M}}\}.$$
Taking the logarithm yields that for every $\delta\in(0,s)$,
$$\frac{\ln j(\vz)}{(\ln \vz^{-1})^t}\le \frac{\max\{\ln J,\ \frac{(2\ln\vz^{-1}-\ln\lz_1)^t}{M}\}}{(\ln \vz^{-1})^t}\to
\frac{2^t}M
  \ \ \ {\rm as \ }\vz\to 0.$$
Since $M$ can be arbitrary large, we obtain \eqref{2.2}.

Lemma \ref{lem1} is proved.
\end{proof}

The number of $k$-element subsets of an $n$-element set is called the number of combinations of $n$ objects taken
$k$ at a time, and is denoted by
$$\binom{n}{k}=\frac{n!}{k!(n-k)!},$$
and the number of permutations of $n$ objects taken $k$ at a time, denoted $P(n,k)$, is given by
 $$P(n,k)=\binom{n}{k}\cdot k!=\frac{n!}{(n-k)!}.$$

\begin{lem}\label{lem2}
We have
\begin{equation}\label{2.3}
n^{\rm NOR}(\vz,S_d')\le
P\big(d,{a_{\vz}(d)}\big)m^{d-a_{\vz}(d)}\prod_{k=1}^{a_{\vz}(d)}j(\vz^{\frac{1}{k}}),
\end{equation}
where  $a_{\vz}(d):=\min\big\{d,
\big\lceil\frac{\ln\vz^{-2}}{\ln(\lambda'_{m+1})^{-1}}\big\rceil-1\big\}$.
\end{lem}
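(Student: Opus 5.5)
We count $\bsj\in\NN^d$ with $\prod_{k=1}^d\lz'_{j_k}>\vz^2$ by sorting the coordinates according to whether $j_k\le m$ (so $\lz'_{j_k}=1$) or $j_k>m$ (so $\lz'_{j_k}\le \lz'_{m+1}<1$).

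\textbf{Step 1: bound the number of active coordinates.} For an admissible $\bsj$, let $\ell$ be the number of coordinates with $j_k>m$; call them active. Then
$$\vz^2<\prod_k\lz'_{j_k}\le(\lz'_{m+1})^{\ell},$$
so $\ell<\ln\vz^{-2}/\ln(\lz'_{m+1})^{-1}$. Hence $\ell\le\lceil\ln\vz^{-2}/\ln(\lz'_{m+1})^{-1}\rceil-1$, and trivially $\ell\le d$. So $\ell\le a:=a_\vz(d)$.

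\textbf{Step 2: order the active coordinates.} Order the active values decreasingly: $\lz'_{j_{k_1}}\ge\lz'_{j_{k_2}}\ge\dots\ge\lz'_{j_{k_\ell}}$. For each $i\le\ell$, the $i$ largest active factors are each at most $\lz'_{j_{k_i}}$, and all other factors are at most $1$. Therefore
$$\vz^2<\prod_k\lz'_{j_k}\le(\lz'_{j_{k_i}})^i,$$
so $\lz'_{j_{k_i}}>\vz^{2/i}=(\vz^{1/i})^2$. By the definition of $j(\cdot)$ (recall $\lz'_j=\lz_j/\lz_1$), this gives $j_{k_i}\le j(\vz^{1/i})$. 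This is the key point: the $i$-th largest active factor has at most $j(\vz^{1/i})$ choices.

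\textbf{Step 3: encode and count.} Encode $\bsj$ as follows:
\begin{itemize}
\item[(i)] an injective assignment of the ranks $1,\dots,\ell$ to positions $k_1,\dots,k_\ell$;
\item[(ii)] the values $j_{k_i}\le j(\vz^{1/i})$;
\item[(iii)] the values in $\{1,\dots,m\}$ at the remaining $d-\ell$ positions.
\end{itemize}
The map from $\bsj$ to this data is injective, so the number of admissible $\bsj$ with exactly $\ell$ active coordinates is at most
$$P(d,\ell)\,m^{d-\ell}\prod_{i=1}^{\ell}j(\vz^{1/i}).$$
Ties among active values are harmless: fixing any ordering still yields an injection.

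\textbf{Step 4: sum over $\ell$.} The main obstacle is summing over $\ell=0,\dots,a$ without producing an extra factor. I will instead embed every configuration into ones with exactly $a$ marked coordinates, extending each $\bsj$ to a fixed-length encoding.

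\emph{Plan A.} Pad an admissible $\bsj$ with $\ell<a$ active coordinates by marking $a-\ell$ additional inactive coordinates, chosen canonically (for instance the first unmarked ones). These get ranks $\ell+1,\dots,a$, and the value $j_k\in\{1,\dots,m\}$ of each must be recorded with at most $j(\vz^{1/i})$ choices. That requires $j(\vz^{1/i})\ge m$, which holds whenever $\vz<1$, since $\lz'_1=\dots=\lz'_m=1>\vz^{2/i}$.

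The remaining $d-a$ coordinates contribute $m^{d-a}$. Distinct $\bsj$ give distinct data (positions with ranks, and all coordinate values), so the total count is at most
$$P(d,a)\,m^{d-a}\prod_{k=1}^{a}j(\vz^{1/k}).$$
This is exactly \eqref{2.3}.

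\emph{Boundary case.} For $\vz\ge1$ the left-hand side is $0$, so \eqref{2.3} holds trivially.

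\emph{Fallback.} If the padding injection turns out to be delicate, I would accept the cruder bound with a factor $(a+1)$ and check afterwards that it suffices for Theorem~\ref{thm1}.
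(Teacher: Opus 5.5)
Your strategy is the paper's: bound the number $\ell$ of active coordinates ($j_k>m$) by $a_\varepsilon(d)$, show that the $i$-th ranked active index is at most $j(\varepsilon^{1/i})$, and count ordered placements as $P(d,a_\varepsilon(d))\,m^{d-a_\varepsilon(d)}$ times the product. Your explicit padding in Step 4, with the check $j(\varepsilon^{1/i})\ge m$, makes precise what the paper does implicitly by ranking all $d$ indices.

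\textbf{The gap is in Step 2: the ordering is backwards, and the key inequality is false as written.} With $\lambda'_{j_{k_1}}\ge\dots\ge\lambda'_{j_{k_\ell}}$, the $i$ largest active factors are each \emph{at least} $\lambda'_{j_{k_i}}$, not at most. So $\prod_k\lambda'_{j_k}\le(\lambda'_{j_{k_i}})^i$ does not follow. The decreasing order only gives $(\lambda'_{j_{k_i}})^{\ell-i+1}>\varepsilon^2$.

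For a concrete failure, take $m=1$, $\lambda'_2=0.9$, $\lambda'_3=0.1$, $d=2$, $\varepsilon^2=0.08$. Then $(j_1,j_2)=(2,3)$ is admissible, since $0.09>0.08$. Yet its second largest active factor, $0.1$, is below $\varepsilon\approx0.28$, and its index $3$ exceeds $j(\varepsilon^{1/2})=2$.

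This is not harmless for Step 4 either. The correct bound $j(\varepsilon^{1/(\ell-i+1)})$ for rank $i$ depends on $\ell$, and it can exceed $j(\varepsilon^{1/i})$ when $i>(\ell+1)/2$. So your encoding would not land in the product set you count.

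\textbf{The repair is to order the active factors increasingly}, or equivalently the indices decreasingly; this is the paper's $j_{1,\max}\ge j_{2,\max}\ge\cdots$.
\begin{itemize}
\item Then the $i$ smallest factors are each at most $\lambda'_{j_{k_i}}$, and all other factors are at most $1$.
\item Hence $(\lambda'_{j_{k_i}})^i>\varepsilon^2$, so $j_{k_i}\le j(\varepsilon^{1/i})$, and ties are harmless.
\item With this ordering, Steps 3 and 4 go through verbatim: the padded inactive coordinates take ranks $\ell+1,\dots,a_\varepsilon(d)$ and values in $\{1,\dots,m\}\subseteq\{1,\dots,j(\varepsilon^{1/i})\}$.
\item Your fallback with the extra factor is therefore unnecessary.
\end{itemize}

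A minor point: Step 1 divides by $\ln(\lambda'_{m+1})^{-1}$. The case $\lambda'_{m+1}=0$, which can occur when $m\ge2$, must therefore be handled separately, as the paper does. In that case every admissible index vector has $\ell=0$, and the count is $m^d$.
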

\begin{proof}
For $\vz\in(0,1)$, set $$A(\vz)=\big\{\bsj \in \NN^d :
\lambda'_{j_1} \lambda'_{j_2}\cdots  \lambda'_{j_d}> \varepsilon^2
\big\}=\big\{\bsj \in \NN^d : \lambda_{j_1} \lambda_{j_2}\cdots
\lambda_{j_d}> \varepsilon^2 \lz_1^{d}\big\}.$$ By \eqref{1.1}
we get
$$n^{\rm NOR}(\vz,S'_d)=n^{\rm NOR}(\vz,S_d)= \big|A(\vz)\big|.$$
 We have $\lz'_j=\lz_j/\lz_1$ and
$$1=\lz'_1=\cdots=\lz'_m>\lz'_{m+1}\ge\lz'_{m+2}\ge\cdots\ge0.$$

If $\lz'_{m+1}=0$, then $$a_{\vz}(d)=0, \ \ n^{\rm
NOR}(\vz,S'_d)=m^d,$$ and \eqref{2.3} holds. Otherwise, suppose
that $\lz'_{m+1}>0$. Set $$u=|\{k\in [d] : j_k>m\}|$$ for $\bsj\in
A(\vz)$. Then
$$(\lz'_{m+1})^{u} \ge\prod_{k=1}^d\lambda'_{j_k}>\vz^2.$$
It implies that
$$u\le \big\lceil\frac{\ln\vz^{-2}}{\ln(\lambda'_{m+1})^{-1}}\big\rceil-1.$$
Then we have
\begin{equation*}
u\le \min\big\{d,
\big\lceil\frac{\ln\vz^{-2}}{\ln(\lambda'_{m+1})^{-1}}\big\rceil-1\big\}:=a_{\vz}(d),
\end{equation*}i.e., for $\bsj\in
A(\vz)$ there are at most $a_{\vz}(d)$ indices $j_k$ which satisfy
$j_k>m$. Hence there are at least $d-a_{\vz}(d)$ indices $j_k$
which satisfy $j_k\le m$ and hence $\lz_{j_k}'=1$.  There are
$\binom{d}{a_{\vz}(d)}m^{d-a_{\vz}(d)}$ ways to select $d-a_{\vz}(d)$
indices $j_k$ that satisfy $j_k\le m$.

For the remaining $a_{\vz}(d)$ indices, let $j_{1,\max}$ be the
largest index of the eigenvalues in the product
$\prod\limits_{k=1}^d\lambda'_{j_k}$. Since
$$\lambda'_{j_{1,\max}}\ge   \prod\limits_{k=1}^d\lambda'_{j_k}>\vz^2,$$
we obtain that
$$j_{1,\max}\le \max\big\{j:\lambda'_j>{\vz^2}\big\}
=j(\vz).$$ Let $j_{k,\max}$ be the $k$-th largest index of the
eigenvalues in the product $\prod\limits_{k=1}^d\lambda'_{j_k},$
$k=2,\cdots,a_\vz(d)$. Since
$$ (\lambda'_{j_{k,\max}})^k\ge \prod_{i=1}^k \lambda'_{j_{i,\max}}\ge  \prod_{k=1}^d\lambda'_{j_k}
>\vz^2,$$
we obtain
\begin{equation*}
j_{k,\max}\le \max\big\{j:\lambda'_j>\vz^{\frac{2}{k}}\big\} =j(\vz^{\frac{1}{k}})
.
\end{equation*}
Then for the remaining $a_{\vz}(d)$ indices, if $k=1,\dots,
a_\vz(d)$, only the $k$-th largest indices
$$j_{k,\max}\in\big\{1, 2, \dots, j(\vz^{\frac{1}{k}})\big\},\
$$ may belong to $A(\vz)$ and there are at most $j(\vz^{\frac{1}{k}})$ ways to select $j_{k,\max}$.
Also there are  $a_{\vz}(d)!$ arrangements for these $a_{\vz}(d)$
indices. Therefore, we obtain
$$n^{\rm NOR}(\vz,S_d')\le
\binom{d}{a_{\vz}(d)}m^{d-a_{\vz}(d)}
\Big(\prod_{k=1}^{a_{\vz}(d)}j(\vz^{\frac{1}{k}})\Big)
a_{\vz}(d)!.
$$
Hence \eqref{2.3} holds. Lemma \ref{lem2} is proved.
\end{proof}

\section{Proofs of Theorems \ref{thm1}}

\

\noindent{\it Proof of Theorem \ref{thm1}.}

 {\bf Necessity.} The necessity for ALG-$(s,t)$-WT was obtained in \cite{SW15}. We give the proof for
 the sake of readers' convenience.  Assume that ALG-$(s,t)$-WT holds for ABS. Since $\lz_1>1$,
 from \cite[Theorem 5.5]{NW08}, we know that $S$ suffers from the curse of
dimensionality, and   there exist  constants $c>0$ and $a>0$, such
that for all $\vz\in(0,1)$,
$$n^{\rm ABS}(\vz, S_d)\ge c(1+a)^d.$$
Hence we have $t>1$. Otherwise
$$\frac{\ln n^{\rm ABS}(1/2,S_d)}{2^s+d^t}\ge \frac{\ln c+d\ln(1+a)}{2^s+d^t}$$
does not tend to zero as $d\to\infty$ which leads to a
contradiction. Consider the normalized problem $S'=\{S'_d\}$, then
we have $S'_d=\frac{1}{\sqrt{\lambda^d_1}}S_1$. The information
complexity of the problem $S$ can be rewritten as
 $$n^{\rm ABS}(\vz,S_d):= \Big|\big\{\bsj \in \NN^d : \lambda'_{j_1} \lambda'_{j_2}\cdots
 \lambda'_{j_d}> \frac{\varepsilon^2}{\lz_1^d}  \big\}\Big|=n^{\rm ABS }(\vz/\lambda_1^\frac{d}{2},S'_d).$$
Then we have
 \begin{align*}
 n^{\rm ABS}(\vz,S_d)&\ge \big|\big\{j_1 \in \NN : \lambda'_{j_1}> \frac{\varepsilon^2}{\lz_1^d} \big\}\big|= n^{\rm ABS}\Big(\vz/\lambda_1^{\frac{d}{2}},S'_1\Big)=j(\vz/\lambda_1^{\frac{d}{2}}).
 \end{align*}
 Taking $\vz=\frac{1}{2}$, we have
 $$0=\lim_{\vz^{-1}+d\to \infty}\frac{\ln n^{\rm ABS}(\vz,d)}{\vz^{-s}+d^t}=\lim_{d\to \infty}\frac{\ln n^{\rm ABS}(\frac{1}{2},d)}{2^s+d^t}\ge \lim_{d\to \infty}\frac{\ln j(1/(2\lambda_1^{\frac{d}{2}}))}{d^t}.$$
 Thus
 $$\lim_{d\to \infty}\frac{\ln j(1/(2\lambda_1^{\frac{d}{2}}))}{d^t}=0.$$
  Let $\vz=1/(2\lambda_1^{\frac{d}{2}})$, we have $d=\frac{\ln1/2+\ln\vz^{-1}}{\ln \lz_1}$.
This
 concludes that
$$\lim_{\vz\to 0}\frac{\ln j(\vz)}{(\ln \vz^{-1})^t}=0,$$and by Lemma
\ref{lem2} we get \eqref{1.3}.

{\bf Sufficiency.} Assume that \eqref{1.3} holds and $t>1$, we
show that $S$ is ALG-$(s,t)$-WT for ABS. Consider the normalized
problem $S'=(S'_d)$,
$$n^{\rm ABS}(\vz,S_d)=n^{\rm NOR}(\vz/\lambda_1^\frac{d}{2},S'_d).$$
 By Lemma \ref{lem2}, we have
\begin{align}
n^{\rm NOR}(\vz,S'_d)\le
P\big(d,{a_{\vz}(d)}\big)m^{d-a_{\vz}(d)}\prod_{k=1}^{a_{\vz}(d)}j(\vz^{\frac{1}{k}})\le
d!\, m^d \prod_{k=1}^{d}j(\vz^{\frac{1}{k}}),\label{3.1}
\end{align}
where
\begin{equation*}\
a_{\vz}(d):=\min\big\{d,\big\lceil\frac{\ln\vz^{-2}}{\ln(\lambda'_{m+1})^{-1}}\big\rceil-1\big\}.
\end{equation*}
For $x\in(0,1)$, let $\ln j(x)=b(x) (\ln x^{-1})^t$. Since
\eqref{1.3} holds,  by Lemma \ref{lem1} we have \eqref{2.2}. Then
there exists a constant number $M$ such that
$$\lim\limits_{x\to 0^+}b(x)= 0, \ \ {\rm and}\ \  0\le b(x)\le M \ {\rm for\ all }\ x\in(0, 1).$$
Taking the logarithm in \eqref{3.1} we get
\begin{align}\label{3.2}
{\ln n^{\rm ABS}(\vz,S_d)}&\le {\ln d!+d\ln m+\sum_{k=1}^{d}\ln
j\big((\vz/\lz_1^{\frac{d}{2}})^{\frac{1}{k}}\big)}\notag \\ &\le
d\ln d +d\ln m+
\sum_{k=1}^{d}b\big((\vz/\lz_1^{\frac{d}{2}})^{\frac{1}{k}}\big)\Big(\frac{1}{k}(\ln\vz^{-1}+\frac{d}{2}\ln
\lz_1)\Big)^t
\notag \\
&\le d\ln (md)
+\sum_{k=1}^{d}b\big((\vz/\lz_1^{\frac{d}{2}})^{\frac{1}{k}}\big)\frac{1}{k^t}\Big((2\ln
\vz^{-1})^t+d^t(\ln \lz_1)^t\Big)\notag\\ &\le d\ln (md)
+2^tM\sum_{k=1}^{d}\frac{(\ln \vz^{-1})^t}{k^t} +(\ln
\lz_1)^td^t\sum_{k=1}^{d}b\big((\vz/\lz_1^{\frac{d}{2}})^{\frac{1}{k}}\big)\frac{1}{k^t}\notag\\
&=: I_1+2^tM I_2+(\ln \lz_1)^t I_3,
\end{align}
where in the third inequality we used the inequality $(a+b)^t\le
2^t(a^t+b^t)$ for $a,b\ge 0,\ t>1$. Since $t>1$, we have
\begin{align*}
\lim_{\varepsilon^{-1}+d \rightarrow \infty} \frac{I_1}{
\varepsilon^{-s}+d^t }=\lim_{\varepsilon^{-1}+d \rightarrow \infty}
\frac{d\ln (md)}{\varepsilon^{-s}+d^t } = 0,
\end{align*}
and
\begin{align*}
\lim_{\varepsilon^{-1}+d \rightarrow \infty} \frac{I_2}{
\varepsilon^{-s}+d^t }=\lim_{\varepsilon^{-1}+d \rightarrow \infty}
\frac{\sum\limits_{k=1}^{d}\frac{1}{k^t}\cdot (\ln \vz^{-1})^t}{
\varepsilon^{-s}+d^t }\le \zeta(t) \lim_{\varepsilon^{-1}+d \rightarrow
\infty} \frac{(\ln \vz^{-1})^t}{ \varepsilon^{-s}+d^t } = 0,
\end{align*}
where $\zeta(t)=\sum_{k=1}^\infty\frac1{k^t}$ is  the Riemann-Zeta
function and is finite for $t>1$.

Since the series $\sum_{k=1}^{\infty}\frac{1}{k^t}$ is convergent
for $t>1$, we get for any $\delta>0$, there exists a constant
$N_\delta\in\NN$ which depends on $\delta$, such that
$$\sum_{k=N_\delta+1}^{\infty}\frac{1}{k^t}<\delta.$$
Then
\begin{align*}
I_3&=
\sum\limits_{k=1}^{N_\delta}b\big((\vz/\lz_1^{\frac{d}{2}})^{\frac{1}{k}}\big)
\frac{1}{k^t}\cdot d^t +\sum\limits_{k=N_\delta+1}^{\infty}
b\big((\vz/\lz_1^{\frac{d}{2}})^{\frac{1}{k}}\big)\frac{1}{k^t}\cdot d^t\\
     &\le \sum\limits_{k=1}^{N_\delta}
     b\big((\vz/\lz_1^{\frac{d}{2}})^{\frac{1}{k}}\big)\frac{1}{k^t}\cdot d^t+M\delta\cdot d^t.
\end{align*}
For the above $\delta>0$, since $\lim\limits_{x\to0}b(x)=0$, there
exists a $\delta_1\in(0,1)$ such that  for  $x\in (0,\delta_1)$,
it holds
$$b(x)<\delta.$$
Since $\lz_1>1$, we get  that
$$\lim_{d+\vz^{-1}\to\infty}\vz/\lz_1^{\frac{d}{2}}=0.$$
Hence  there exists a number $X>0$ such that
$\vz/\lz_1^{\frac{d}{2}}<\delta_1^{N_\delta}$ whenever
$d+\vz^{-1}>X$. It follows that
$$b\big((\vz/\lz_1^{\frac{d}{2}})^{\frac{1}{k}}\big)\le \delta, \ \ {\rm for }\ \ k=1, 2, \dots , N_\delta.$$
Hence$$\sum\limits_{k=1}^{N_\delta}
b\big((\vz/\lz_1^{\frac{d}{2}})^{\frac{1}{k}}\big)\frac{1}{k^t}
\cdot d^t \le \delta\zeta(t)\, d^t.$$ We obtain that  for
$d+\vz^{-1}>X$,
\begin{align*}
\frac{I_3}{\vz^{-s}+d^t }&=\frac{d^t\sum\limits_{k=1}^{d}b\big((\vz/\lz_1^{\frac{d}{2}})^{\frac{1}{k}}\big)\frac{1}{k^t}}
{\vz^{-s}+d^t }
\\ & \le \delta\zeta(t)
\cdot \frac{d^t}{\vz^{-s}+d^t }+M\delta\cdot\frac {d^t}{\vz^{-s}+d^t }\\
&\le \delta(\zeta(t)+M).
\end{align*}
Since $\delta$ can be arbitrary small, we conclude
$$\lim_{\varepsilon^{-1}+d \rightarrow \infty} \frac{I_3}{d^t +
\varepsilon^{-s}}=0.$$ Going back to \eqref{3.2} we have
\begin{align*}
\lim_{\varepsilon^{-1}+d \rightarrow \infty} \frac{\ln n^{\rm
ABS}(\varepsilon, S_{d})}{\varepsilon^{-s}+d^t } \le
\lim_{\varepsilon^{-1}+d \rightarrow \infty}\frac{I_1+2^tMI_2+
(\ln \lz_1)^tI_3}{\vz^{-s}+d^t }=0.
\end{align*}

 Theorem \ref{thm1} is proved. $\hfill\Box$


\begin{thebibliography}{99}

\bibitem{CW1} J. Chen, H. Wang. Average case tractability of multivariate approximation with Gaussian kernels.
Journal of Approximation Theory, 2019, 239: 51-71.

\bibitem{DKPW14} J. Dick, P. Kritzer, F. Pillichshammer, H. Wo\'zniakowski. Approximation of analytic functions in Korobov spaces.
Journal of Complexity, 2014, 30: 2-28.

\bibitem{DLPW11} J. Dick, G. Larcher, F. Pillichshammer, H. Wo\'zniakowski. Exponential convergence and tractability
of multivariate integration for Korobov spaces. Mathematics of Computation, 2011, 80: 905-930.

\bibitem{GW11} M. Gnewuch, H. Wo\'zniakowski. Quasi-polynomial tractability. Journal of Complexity, 2011, 27: 312-330.

\bibitem{HKW19} F. J. Hickernell, P. Kritzer, H. Wo\'zniakowski. Exponential tractability of linear
tensor product problems. In: D. R. Wood, J. DeGier, C. Praeger, T. Tao (eds.), MATRIX Annals. Cham: Springer, 2020.

\bibitem{IKPW16a} C. Irrgeher, P. Kritzer, F. Pillichshammer, H. Wo\'zniakowski.
Tractability of multivariate approximation defined over Hilbert spaces with exponential weights.
Journal of Approximation Theory, 2016, 207: 301-338.

\bibitem{KW19} P. Kritzer, H. Wo\'zniakowski. Simple characterizations of exponential tractability for linear multivariate
problems. Journal of Complexity, 2019, 51: 110-128.

\bibitem{KPW} P. Kritzer, F. Pillichshammer, H. Wo\'zniakowski. Exponential tractability of linear
 weighted tensor product problems in the worst-case setting for arbitrary linear functionals. {Journal of Complexity}, 2020, 61: 101501.

\bibitem{NW08} E. Novak, H. Wo\'zniakowski. \textit{Tractability of Multivariate Problems, Volume I: Linear Information}.
 Zurich: European Mathematical Society, 2008.

\bibitem{NW10} E. Novak, H. Wo\'zniakowski. \textit{Tractability of Multivariate Problems, Volume II: Standard Information for Functionals}.
Zurich: European Mathematical Society, 2010.

\bibitem{NW12} E. Novak, H. Wo\'zniakowski. \textit{Tractability of Multivariate Problems, Volume III: Standard Information for Operators}.
Zurich: European Mathematical Society, 2012.

\bibitem{PP14} A. Papageorgiou, I. Petras. A new criterion for tractability of multivariate problems. Journal of Complexity, 2014, 30: 604-619.

\bibitem{PPW17} A. Papageorgiou, I. Petras, H. Wo\'zniakowski. $(s,\ln^{\kappa})$-weak tractability of linear problems.
Journal of Complexity, 2017, 40: 1-16.

\bibitem{S13} P. Siedlecki. Uniform weak tractability. Journal of Complexity, 2013, 29: 438-453.

\bibitem{SW15} P. Siedlecki, M. Weimar. Notes on $(s,t)$ weak tractability: a refined classification of problems with
(sub)exponential information complexity. Journal of Approximation Theory, 2015, 200: 227-258.

\bibitem{W19} H. Wang. A note about EC-$(s,t)$-weak tractability of multivariate approximation with analytic Korobov kernels.
 Journal of Complexity, 2019, 55: 101412.

\bibitem{X15} G. Xu. Exponential convergence-tractability of general linear problems in the average case setting.
Journal of Complexity, 2015, 31: 617-636.

\end{thebibliography}
\end{document}